\title[Hyperbolic knots are not generic]{Hyperbolic knots are not generic}
\author{Yury Belousov and Andrei Malyutin}
\thanks{The research is supported by the Foundation for the Advancement of Theoretical Physics and Mathematics ``BASIS''}
\address{Yury Belousov\\
Faculty of Mathematics, National Research University HSE}
\email{bus99@yandex.ru}
\address{Andrei Malyutin\\
St.\,Petersburg Department of 
Steklov Institute of Mathematics\\
St.~Petersburg State University}
\email{malyutin@pdmi.ras.ru}
\def \crn {\operatorname{cr}}
\newcommand \sm {\setminus}
\newcommand \R  {\mathbb R}
\newcommand \inr{\operatorname{int}}
\newcommand \be     {\begin{equation}}
\newcommand \ee     {\end{equation}}
\newtheorem{thm}{Theorem}
\newtheorem{lemma}{Lemma}
\newtheorem{cor}{Corollary}
\theoremstyle{definition}
\newtheorem{definition}{Definition}
\theoremstyle{remark}
\newtheorem*{remark}{Remark}
\begin{document}
\maketitle

\begin{abstract}
We show that the proportion of hyperbolic knots among all of the prime knots of $n$ or fewer crossings does not converge to $1$ as $n$ approaches infinity. Moreover, we show that if $K$ is a nontrivial knot then the proportion of satellites of~$K$ among all of the prime knots of $n$ or fewer crossings does not converge to $0$ as $n$ approaches infinity.
\end{abstract} 

\maketitle

The present note is intended as an extension and composed as an addendum to~\cite{Mal18} and~\cite{Mal19}, where the question of genericity of hyperbolic knots and links is studied. In particular, we kindly refer the reader to these two papers and references therein for the terminology and introduction to the subject. In this note, we proceed directly with a new lemma (see Lemma~\ref{lem:insoluble} below) we just discovered and explain how to plug this lemma into constructions of~\cite{Mal18} and~\cite{Mal19} in order to prove that the proportion of hyperbolic knots among all of the prime knots of $n$ or fewer crossings does not converge to $1$ as $n$ approaches infinity (this disproves a well-known conjecture; see~\cite[p.\,119]{Ad94}) and, moreover, that for any nontrivial knot~$K$ the proportion of satellites of~$K$ among all of the prime knots of~$n$ or fewer crossings does not converge to $0$ as $n$ approaches infinity.

A~brief explanation of the subject matter is as follows. One of the key ingredients of arguments in~\cite{Mal18} and~\cite{Mal19} are specific ways of constructing satellite knots with local satellite structures ($\gamma$-knots and $K$-entanglements) from nonsatellite ones. A~difficult point there is showing that the obtained satellites are prime. 
A~few days ago, we composed a list of various kinds of properties of crossings in knot diagrams for studying Lernaean knots (see~\cite[Definition~1.9]{Mal18}). It turned out that one of these properties, on the one hand, is amenable enough to see that any diagram of a nontrivial knot has a crossing with this property (Lemma~\ref{lem:insoluble}) and, on the other hand, is strong enough to guarantee that each prime knot diagram having a crossing with this property yields at least one prime satellite under our methods of satellite construction.
This property 
is defined as follows.

\begin{definition}[soluble and insoluble crossings]
\label{def:null}
Let $D$ be a knot diagram on $\R^2\subset \R^3$, let $x$ be a crossing of~$D$, and let $\Gamma$ be a knot in~$\R^3$ such that $D$ is its regular projection.
We say that $x$ is \emph{soluble} if $\R^3$ contains a $3$-ball~$B$ such that 
$(B,B\cap\Gamma)$ is a trivial $1$-string tangle and 
$B$ contains the straight line segment~$I_x$ with endpoints at~$\Gamma$ that projects to~$x$.
Otherwise, we say that $x$ is \emph{insoluble}.
\end{definition}

Presumably, no minimal knot diagram contains a soluble crossing. 
In particular, this is true if the conjecture that the crossing number of knots is additive under connected sum is true.
We prove the following.

\begin{lemma}
\label{lem:insoluble}
Each diagram of any nontrivial knot has an insoluble crossing.
\end{lemma}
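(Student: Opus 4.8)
The plan is to prove the contrapositive: \emph{if $D$ is a diagram of a knot $\Gamma$ and every crossing of $D$ is soluble, then $\Gamma$ is the trivial knot}. I would argue by induction on the number $n$ of crossings of $D$, the case $n=0$ being immediate. The starting point is a reformulation of solubility. Suppose $x$ is soluble, witnessed by a ball $B$ with $I_x\subseteq B$ and $(B,B\cap\Gamma)$ a trivial $1$-string tangle. Then $S:=\dd B$ is a $2$-sphere meeting $\Gamma$ transversally in two points, so, working in $S^3=\R^3\cup\{\infty\}$, it exhibits $\Gamma$ as a connected sum of two knots, one of which, being the closure of the trivial tangle $(B,\Gamma\cap B)$, is the unknot. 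Hence $\Gamma$ is isotopic to the other summand, the closure of the arc $\alpha:=\Gamma\sm\inr B$; and since $\dd I_x\subset\Gamma\cap B$, both strands of $\Gamma$ at $x$ lie in $B$ while all the knotting is carried by $\alpha$. Informally, $x$ is ``inessential'' for the knot type, and this is the leverage to be exploited.

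For the inductive step I would pick a crossing $x$ together with a solubility ball $B$ for which the number of self-crossings of the shadow of $\Gamma\cap B$ is as small as possible (it is at least one, coming from $x$), and then push $\Gamma\cap B$ out of $\inr B$ across a boundary-parallelism disk of the trivial tangle $(B,\Gamma\cap B)$. This is an ambient isotopy of $\R^3$ fixing $\alpha$; it destroys all self-crossings of the shadow of $\Gamma\cap B$ (in particular $x$), and, with the parallelism routed suitably on $S$, I would hope to carry it out without creating new crossings, obtaining a diagram $D'$ of $\Gamma$ with strictly fewer crossings. To keep the induction alive one must see that $D'$ still has only soluble crossings; for this I would first put the solubility balls $B_y$ of the remaining crossings $y$ into good position with respect to $S$. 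Each $\dd B_y\cap S$ is a union of circles; an innermost one bounds a disk in $S$ meeting $\Gamma$ in at most one point, and compressing $\dd B_y$ along it and discarding the component disjoint from $\Gamma\cap B_y$ produces — by the stability of triviality of $1$-string tangles under such compressions — another solubility ball for $y$ with fewer intersection circles. Iterating makes every $B_y$ disjoint from $\inr B$, so the $B_y$ survive the reduction and certify solubility in $D'$, and the induction hypothesis then yields that $\Gamma$ is trivial.

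The hard part is exactly the bookkeeping inside the inductive step: one must simultaneously ensure that pushing $\Gamma\cap B$ out of $B$ (and re-closing $\alpha$ into a diagram) is done economically enough that the crossing count genuinely drops, and that the repositioned balls $B_y$ really remain solubility balls after the isotopy rather than merely trivial-tangle balls for some other configuration. It is quite possible that the cleanest implementation replaces the raw crossing number by a finer complexity — for instance a lexicographic pair also recording the minimal ``size'' of a solubility ball — and that the re-closing of $\alpha$ is handled by choosing $B$, among all solubility balls for $x$, so that the two points $S\cap\Gamma$ project into a single face of the shadow of $\alpha$; these refinements are where I expect the real work to lie.
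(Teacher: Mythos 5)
Your overall strategy---inducting on the crossing number after using a soluble crossing to split off an unknotted summand---is genuinely different from the paper's argument, but the inductive step has a gap that is not mere bookkeeping, and as written the proof does not go through. The solubility ball $B$ is an arbitrary ball in $\R^3$: nothing in Definition~\ref{def:null} ties $B$ or $S=\dd B$ to the projection direction. Isotoping $\Gamma\cap B$ across a boundary-parallelism disk does kill the self-crossings of its shadow, but the repositioned arc runs along $S$, and the shadow of $S$ may sweep over the shadow of $\Gamma\sm B$ arbitrarily; there is no reason the total crossing number of the resulting diagram $D'$ drops, and ``routing the parallelism suitably on $S$'' is precisely the unproved content (minimizing the self-crossings of the shadow of $\Gamma\cap B$ does nothing to control crossings between the pushed arc and the rest of the diagram). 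The cleanup of the other balls $B_y$ is also problematic: an innermost circle of $\dd B_y\cap S$ on $S$ may bound a disk containing both points of $S\cap\Gamma$, not at most one; after a compression the ball you keep must still contain the straight segment $I_y$ and still meet $\Gamma$ in a single arc; and even if every $B_y$ is made disjoint from $\inr B$, the pushed arc---which ends up on or just outside $S$---may enter some $B_y$, so that $B_y$ no longer certifies solubility of $y$ in $D'$. In effect your induction requires showing that a diagram with a soluble crossing can be simplified to a strictly smaller diagram of the same knot all of whose crossings are again soluble; that is much stronger than the lemma and is close in spirit to crossing-number-additivity questions, which is exactly the kind of difficulty one wants to avoid.

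The paper sidesteps all of this geometry with a short algebraic argument: for overarcs meeting at a crossing $x$, with Wirtinger generators $a$ and $b$, the element $ab^{-1}$ is represented by a small simple closed curve near $I_x$ having linking number $0$ with $\Gamma$. If $x$ is soluble, this curve lies in $B\sm\Gamma$, and since $(B,B\cap\Gamma)$ is a trivial $1$-string tangle, $\pi_1(B\sm\Gamma)\cong\mathbb{Z}$ is generated by a meridian and classes there are detected by linking number; hence the curve is nullhomotopic and $a=b$. If every crossing were soluble, all Wirtinger generators would coincide, so $\pi_1(\R^3\sm\Gamma)\cong\mathbb{Z}$ and the knot would be trivial. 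Your first paragraph (the connected-sum reading of a soluble crossing) is correct; if you want to salvage your approach, I would convert that observation into this group-theoretic conclusion at the crossing rather than into a diagrammatic reduction.
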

\begin{proof}
Let $D$ be a diagram of a nontrivial knot~$K$.
We assume that $D$ lies on a plane $\R^2$ contained in~$\R^3$,
and let $\Gamma$ be a simple closed smooth curve in~$\R^3$ 
representing~$K$ such that $D$ corresponds to~$\Gamma$.
Without loss of generality we can assume that $\Gamma$ does not intersect $\R^2$. 
If $x$ is a crossing on $D$, we denote by $e_x$ the nearest to $D$ of the two points of $\Gamma$ projecting to $x$, and let $E$ be the union of $e_x$ for all crossings~$x$ of~$D$. 
An \emph{overarc} on $D$ is the projection of a connected component of $\Gamma \setminus E$. 
We say that two overarcs $\alpha$ and~$\beta$ are \emph{meeting} at a crossing~$x$, if $x$ is an endpoint of~$\alpha$ and $x$ lies on~$\beta$. 
Now, we choose an orientation of~$D$ and consider the corresponding Wirtinger presentation of $\pi_1(\mathbb{R}^3\setminus \Gamma)$ derived from~$D$ (see, e.\,g.,~\cite{BZ06}).
Let $x$ be a crossing on $D$, let $\alpha$ and $\beta$ be overarcs on $D$ meeting at $x$, let $a$ and $b$ be Wirtinger generators corresponding to~$\alpha$ and~$\beta$, respectively, and let $I_x$ be the straight line segment in $\R^3$ with endpoints at~$\Gamma$ that projects to~$x$.
We readily see that any loop representing the element $a b^{-1}$ is freely homotopic to a simple closed curve~$\Delta$ in a small neighborhood of $I_x$. Notice that any loop representing $a b^{-1}$ (and hence $\Delta$) has zero linking number with $\Gamma$. If $x$ is soluble then there exists a ball~$B^3$ such that (i) $(B^3, B^3 \cap \Gamma)$ is a trivial $1$-string tangle, (ii) $I_x \subset B^3$, (iii) $\Delta \subset B^3$.
In this case, $\Delta$ is nullhomotopic (in $B^3\setminus \Gamma$ and hence in $\R^3\setminus \Gamma$). Therefore, we have $ab^{-1}=1$ whenever $a$ and $b$ are Wirtinger generators corresponding to overarcs meeting at a soluble crossing.
Consequently, if all of the crossings in~$D$ are soluble then we have $a=b$ for any pair of Wirtinger generator. In this case, we have $\pi_1(\R^3\setminus \Gamma) = \mathbb{Z}$, which means that $K$ is a trivial knot (see, e.\,g.,~\cite[Proposition 3.17]{BZ06}). 
This contradiction completes the proof. 
\end{proof}

The concept of insoluble crossing is related to the concept of knots and diagrams with weak property~PT introduced in~\cite[Definition 7.1]{Mal18}. We repeat this definition here for the convenience of the reader.

\begin{definition}[weak property PT]
Let $D$ be a knot diagram on the $2$-sphere $S^2=\R^2\cup\{\infty\}$.
We say that $D$ has \emph{weak property PT} (PT stands for ``tangle primeness'') if $D$ is the numerator or denominator closure of a diagram of a locally trivial tangle.
In other words, $D$ has weak property PT if there exists a $2$-disk $d\subset S^2$ such that:
\begin{itemize}
\item the boundary $\partial d$ intersects $D$ transversely in four points;
\item the intersection $d\cap D$ consists of two simple disjoint arcs; 
\item the $2$-string tangle represented by the diagram $\delta \cap D$, where $\delta$ stands for the complementary disk $S^2\setminus \inr(d)$, is locally trivial.
\end{itemize}

We say that a knot has \emph{weak property PT} if it has a minimal diagram with weak property PT.
\end{definition}

Lemma~\ref{lem:insoluble} implies the following corollary.

\begin{cor}
\label{cor:wpPT}
Each diagram of any prime knot has weak property~PT.
In particular, any prime knot has weak property~PT.
\end{cor}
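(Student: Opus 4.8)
Since a prime knot is in particular nontrivial, it suffices to show that every diagram $D$ of a prime knot $K$ has weak property~PT (applying this to a minimal diagram of $K$ then gives the ``in particular'' clause). By Lemma~\ref{lem:insoluble}, $D$ has an insoluble crossing $x$. Realize $D$ by a smooth simple closed curve $\Gamma\subset\R^3$ missing the projection plane, as in the proof of Lemma~\ref{lem:insoluble}, and let $I_x$ be the straight segment over $x$. The idea is to take the witnessing $2$-disk of weak property~PT to sit near $x$: choose a small $2$-disk $d$ around $x$ with $\inr(d)$ meeting $D$ only along the two strands through $x$, put $\delta=S^2\sm\inr(d)$, and realize the $2$-string tangle $\delta\cap D$ geometrically as $(\,\overline{\R^3\sm N}\,,\ \overline{\Gamma\sm N}\,)$, where $N$ is a small $3$-ball around $I_x$ meeting $\Gamma$ in exactly those two strands. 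Then $D$ is obtained from a diagram of this tangle by a numerator or denominator closure, once the single crossing at $x$ is reincorporated into the tangle side. So the whole statement reduces to: \emph{this complementary tangle is locally trivial}.

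\textbf{The core step.} To prove that, suppose $T:=(\,\overline{\R^3\sm N}\,,\ \overline{\Gamma\sm N}\,)$ has a local knot: a $3$-ball $B\subset\overline{\R^3\sm N}$ with $B\cap\Gamma$ a single arc of some nontrivial knot type $J$ and $\partial B$ meeting $\Gamma$ transversely in two points. Then $\partial B$ exhibits a connected-sum decomposition $K=J\#K'$; since $K$ is prime and $J$ is nontrivial, $K'$ is the unknot and $J=K$. Now $B\subset\overline{\R^3\sm N}$ is disjoint from $N\supset I_x$, so $\overline{\R^3\sm B}$ — which we may take to be an honest $3$-ball in $\R^3$, e.g.\ after attaching to $B$ a thin tube running off toward infinity — contains $I_x$ and meets $\Gamma$ in a single arc whose knot type is $K'=\,$the unknot. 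A $1$-string tangle whose closure is the unknot has complement with infinite cyclic fundamental group and is therefore trivial (cf.\ \cite[Proposition~3.17]{BZ06}). Hence $\overline{\R^3\sm B}$ is a trivial $1$-string tangle containing $I_x$, i.e.\ $x$ is soluble, contradicting Lemma~\ref{lem:insoluble}. This is the conceptual engine of the proof, and it is short: primeness forces any local knot of the complementary tangle to carry \emph{all} of the knotting of $K$, and then its complement is a trivial $1$-string tangle that one can wrap around $I_x$.

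\textbf{The main obstacle.} The delicate point is the bookkeeping around the crossing $x$ when matching this picture to the formal definition of weak property~PT. In any genuine weak-PT decomposition the disk $d$ must avoid every crossing, so the crossing at $x$ is necessarily on the tangle side; one therefore has to check that absorbing this single twist into $T$ preserves local triviality and that the resulting diagram really closes up to $D$. I expect to handle the first by pushing a hypothetical local-knot $3$-ball off the small twist region — using that that region contains only unknotted strands — so that it lies in $\overline{\R^3\sm N}$ and contradicts local triviality of $T$; if a subtlety remains (a local-knot ball that insists on meeting both $I_x$ and the twist region), I would instead run the core step for each of the finitely many small disks obtained by placing $d$ on the various adjacent pairs of legs at $x$, and observe that a local-knot ball containing $I_x$ for \emph{every} such choice would, by the way the corresponding spheres partition $\Gamma$, force $K$ to be trivial. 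Everything else — the $\R^3$-versus-$S^3$ adjustments and the passage between the two formulations of the definition — is routine.
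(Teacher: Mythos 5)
Your proposal is correct and follows essentially the same route as the paper: take an insoluble crossing $x$ from Lemma~\ref{lem:insoluble}, place the witnessing disk near $x$, and show the complementary $2$-string tangle is locally trivial because a local-knot ball would, by primeness of the knot, force the complementary $1$-string tangle (which contains $I_x$) to be trivial, making $x$ soluble. The only cosmetic differences are that the paper absorbs the crossing into the tangle side via a subdisk $d'\subset d$ and a tangle homeomorphism (your push-off-the-twist argument), and cites Schubert's unique factorization theorem where you argue directly from primeness together with the Dehn's-lemma/$\pi_1$ criterion for triviality of a $1$-string tangle.
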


\begin{proof} 
Let $D_P$ be a prime knot diagram on the $2$-sphere $S^2=\R^2\cup\{\infty\}$.
Lemma~\ref{lem:insoluble} says that $D_P$ has an insoluble crossing, say~$x$.
In~$S^2$, let $d$ be a disk containing~$x$ such that the intersection $d \cap D_P$ is homeomorphic to~``$\times$''
and $\partial d$ intersects $D_P$ transversely in four points. 
Let $\delta$ denote the complementary disk $S^2\setminus \inr(d)$, and let $(B,t)$ be the $2$-string tangle represented by the diagram $\delta \cap D_P$. 
Now, we take a subdisk~$d'$ in~$d$ such that the intersection $d' \cap D_P$ consists of 
two subarcs on two distinct legs of $\times=d \cap D_P$ and $\partial d'$ intersects $D_P$ transversely in four points (see Fig.~\ref{fig:d-and-d}).

\begin{figure}[ht]
\centering 
\begin{tikzpicture} 
[knots/.style={white,double=black,very thick,double distance=1.6pt}]
\draw[knots](-0.9,-0.9)--(0.9,0.9);
\draw[knots](-0.9,0.9)--(0.9,-0.9);
\draw[help lines, line width=0.6pt](0,0) circle (5/5);
\draw[help lines, line width=0.6pt] (-45:4/5) arc (-45:45:4/5) arc (45:225:1/5) arc (45:-45:2/5) arc (135:315:1/5);
\draw (3/5,0) node {$d'$};
\end{tikzpicture} 
\caption{Disks~$d$ and $d'$.} 
\label{fig:d-and-d}
\end{figure}
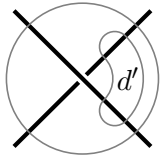

Let $\delta'$ denote the disk $S^2\setminus \inr(d')$.
The diagram $\delta' \cap D_P$ represents a $2$-string tangle homeomorphic to the $2$-string tangle $(B,t)$. 
We show that $(B,t)$ is locally trivial so that $d'$ meets all of the requirements from the definition of weak property~PT.

Suppose on the contrary that $(B,t)$ is locally knotted.
Let $\Gamma$ be a knot in~$\R^3$ such that $D_P$ is its regular projection,
let $I_x$ be the straight line segment with endpoints at~$\Gamma$ that projects to~$x$ (as in Definition~\ref{def:null}), 
let $S^3=\R^3\cup\{\infty\}$ be the one-point compactification of~$\R^3$,
and let $B_0$ be a $3$-ball in~$\R^3$ containing~$I_x$ such that $(B_0,B_0\cap \Gamma)$ is a trivial $2$-tangle with each of its components containing an endpoint of~$I_x$.
We denote the $3$-ball $S^3\sm\inr(B_0)$ by~$B_2$.
Then $(B_2,B_2\cap\Gamma)$ by construction is homeomorphic to $(B,t)$ and hence locally knotted. This means by definition that $B_2$ contains a $3$-ball~$B_1$ such that 
${(B_1,B_1\cap \Gamma)}$ is a nontrivial $1$-tangle.
We denote the $3$-ball $S^3\sm\inr(B_1)$ by~$B$.
Since $\Gamma$ is prime, the Unique Factorization Theorem by~\cite{Sch49} implies that $(B_1,B_1\cap \Gamma)$ is $\Gamma$-knotted and $(B,B\cap\Gamma)$ is a trivial $1$-tangle.
Since $B$ contains $B_0$ and $I_x$, it follows that $x$ is soluble.
This contradiction completes the proof. 
\end{proof}

Corollary~\ref{cor:wpPT} implies Conjectures~10.1 and~10.2 of~\cite{Mal18} (Conjecture~10.1 is precisely Corollary~\ref{cor:wpPT}, and Conjecture~10.2 is a weaker version of Conjecture~10.1).
In particular, Conjecture~10.2 of~\cite{Mal18} is true for $\epsilon=1$. 
As is proved in~\cite{Mal18} (see~\cite[Theorem 10.3]{Mal18}), this implies the following. 

\begin{thm}\label{th:hyperbolic0}
The proportion of hyperbolic knots among all of the prime knots of $n$ or fewer crossings does not converge to $1$ as $n$ approaches infinity. 
Moreover, let $P_n$ {\rm(}\/resp., $H_n$, $S_n$\/{\rm)} denote the number of prime {\rm(}\/resp., hyperbolic, prime satellite\/{\rm)} knots of~$n$ or fewer crossings.
Then
\begin{equation}
\label{eq:S17}
\limsup_{n\to\infty}\frac{S_n}{P_n}~>~\frac{1}{2\cdot 10^{17}},
\end{equation}
and therefore
\begin{equation}
\label{eq:H17}
\liminf_{n\to\infty}\frac{H_n}{P_n}~<~1-\frac{1}{2\cdot 10^{17}}.
\end{equation}
\end{thm}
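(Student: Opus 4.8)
The plan is to deduce Theorem~\ref{th:hyperbolic0} directly from Corollary~\ref{cor:wpPT} by feeding it into the machinery already assembled in~\cite{Mal18}. Concretely, \cite[Theorem~10.3]{Mal18} derives the two displayed inequalities~\eqref{eq:S17} and~\eqref{eq:H17} as a \emph{conditional} statement, under the assumption that Conjecture~10.2 of~\cite{Mal18} holds with $\epsilon=1$ — equivalently, that every prime knot admits a minimal diagram with weak property~PT. Corollary~\ref{cor:wpPT} establishes the (formally stronger) assertion that \emph{every} diagram of a prime knot has weak property~PT, so this hypothesis is now unconditionally available, and Theorem~\ref{th:hyperbolic0} follows at once. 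In this sense the only new ingredient is Lemma~\ref{lem:insoluble}; the quantitative counting is inherited verbatim from~\cite{Mal18}.

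For completeness I would also recall how~\cite[Theorem~10.3]{Mal18} produces the count, so that the role of weak property~PT is transparent. Starting from a minimal diagram $D$ of a prime knot $K$ with $n$ crossings, weak property~PT yields a disk $d\subset S^2$ meeting $D$ in two simple arcs whose complementary $2$-string tangle is locally trivial. One then inserts, on the $d$-side, a fixed $K$-entanglement (or $\gamma$-knot) pattern of bounded complexity; this replaces the trivial $2$-string tangle inside $d$ by a gadget carrying an essential torus, producing a diagram $D'$ of a knot $K'$ that is a prime satellite. The crossing number of $D'$ exceeds $n$ by at most a universal constant $c$ determined by the gadget.

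The hard point — and the reason weak property~PT is exactly the right hypothesis — is primality of $K'$: one must exclude every nontrivial connected-sum decomposition of $K'$, and this is precisely what the locally trivial (tangle-prime) structure of the complementary tangle delivers, via Schubert's Unique Factorization Theorem~\cite{Sch49}, just as in the proof of Corollary~\ref{cor:wpPT} above. Granting this, the assignment $D\mapsto D'$ induces a map from prime knots of at most $m$ crossings to prime satellites of at most $m+c$ crossings which is at most $M$-to-one for a universal $M$, so $S_{m+c}\ge P_m/M$ for all $m$, i.e.\ $S_n/P_n\ge P_{n-c}/(M\,P_n)$. Since the number of prime knots of $n$ crossings grows at most at a known exponential rate, $\limsup_{n\to\infty}P_{n-c}/P_n$ is bounded below by an explicit positive constant; combining this with the explicit values of $c$ and $M$ supplied by the construction gives $\limsup_{n\to\infty}S_n/P_n>1/(2\cdot10^{17})$, which is~\eqref{eq:S17}. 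Inequality~\eqref{eq:H17} is then purely formal: no hyperbolic knot is a satellite, so $H_n+S_n\le P_n$ for every $n$, whence $\liminf_{n\to\infty}H_n/P_n\le 1-\limsup_{n\to\infty}S_n/P_n<1-\frac{1}{2\cdot10^{17}}$.

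I expect the only real obstacle, if one reconstructs~\cite[Theorem~10.3]{Mal18} from scratch rather than quoting it, to be exactly the primality of the constructed satellites $K'$ — everything else (the crossing-number bookkeeping, the bounded multiplicity of the construction, the growth estimates for $P_n$, the disjointness of the hyperbolic and satellite classes) is either routine or classical. The present note removes this obstacle for good by supplying, through Lemma~\ref{lem:insoluble} and Corollary~\ref{cor:wpPT}, the locally trivial tangle structure that Schubert's theorem needs.
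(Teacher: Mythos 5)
Your proposal is correct and follows essentially the same route as the paper: the paper's proof likewise consists of observing that Corollary~\ref{cor:wpPT} establishes Conjectures~10.1 and~10.2 of~\cite{Mal18} (the latter with $\epsilon=1$), so that the conditional counting result \cite[Theorem~10.3]{Mal18} applies and yields \eqref{eq:S17} and \eqref{eq:H17} immediately. Your additional sketch of how \cite[Theorem~10.3]{Mal18} produces the estimate is a faithful recollection of that cited machinery rather than a different argument.
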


Furthermore, combining Corollary~\ref{cor:wpPT} with results and arguments of~\cite{Mal18} yields the following more general result. 

\begin{thm}\label{th:satellite}
If $K$ is a nontrivial knot, then the proportion of satellites of~$K$ among all of the prime knots of $n$ or fewer crossings does not converge to~$0$ as $n$ approaches infinity. 
More precisely, if $\crn(K)$ is the crossing number of~$K$, $S_n(K)$ is the number of prime satellites of~$K$ with $n$ or fewer crossings, and $\lambda=\limsup_{n\to\infty} \sqrt[n]{P_n}$, then we have
\begin{equation*}
\limsup_{n\to\infty}\frac{S_n(K)}{P_n}~\geq~\frac{1}{1+\lambda^{6(4\crn(K)+1)}}~>~10^{-26\crn(K)}.
\end{equation*}
Furthermore, if $K$ is prime, then
\begin{equation*}
\limsup_{n\to\infty}\frac{S_n(K)}{P_n}~\geq~\frac{1}{1+\lambda^{6\crn(K)}}~>~10^{-7\crn(K)}.
\end{equation*}
\end{thm}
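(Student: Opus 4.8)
The plan is to bootstrap from Corollary~\ref{cor:wpPT} exactly as Theorem~\ref{th:hyperbolic0} was bootstrapped from Conjecture~10.2 of~\cite{Mal18}, but now tracking the dependence on the target knot~$K$ rather than on a fixed universal constant. First I would recall the satellite-construction machinery of~\cite{Mal18}: given a prime knot diagram~$D$ with a distinguished locally trivial $2$-string tangle (the complement of the small disk~$d'$ from Corollary~\ref{cor:wpPT}), one replaces a short trivial subtangle near an insoluble crossing by a knotted $2$-string tangle built from~$K$ — a ``$K$-entanglement'' — to obtain a diagram of a satellite of~$K$. The key point supplied by Corollary~\ref{cor:wpPT} is that this replacement can be performed on \emph{every} prime diagram, and that the resulting knot is again prime: this is precisely where insolubility of the crossing is used, via the Unique Factorization argument already carried out in the proof of the corollary.

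The second step is the counting/entropy estimate. One fixes a minimal-crossing prime knot~$J$ with $\crn(J)=m$, takes a minimal diagram~$D_J$, and produces from it a prime satellite~$\widehat{J}$ of~$K$ whose crossing number is bounded by $m$ plus the crossing cost of inserting the $K$-entanglement. A standard tangle-insertion bound — the entanglement realizing~$K$ can be drawn with $O(\crn(K))$ crossings, and the explicit constants in~\cite{Mal18} give $6\crn(K)$ in the prime case and $6(4\crn(K)+1)$ in general — yields $\crn(\widehat{J}) \le m + c(K)$ with $c(K)$ linear in $\crn(K)$. The map $J \mapsto \widehat{J}$ is injective (one recovers $J$ by deleting the entanglement and simplifying, since the satellite structure is canonically locatable), so the number of prime satellites of~$K$ with at most $n$ crossings is at least the number $P_{n-c(K)}$ of prime knots with at most $n-c(K)$ crossings. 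Dividing by~$P_n$ and using $P_{n-c(K)}/P_n \gtrsim \lambda^{-c(K)}$ (which follows from $\lambda = \limsup \sqrt[n]{P_n}$, after the elementary observation that $P_n$ grows submultiplicatively enough for this ratio bound to survive in the $\limsup$), one gets $\limsup_n S_n(K)/P_n \ge 1/(1+\lambda^{c(K)})$. Plugging the known upper bound on~$\lambda$ (from crossing-number growth estimates, $\lambda < $ some explicit constant around $13$ or less) converts $\lambda^{6\crn(K)}$ and $\lambda^{6(4\crn(K)+1)}$ into the claimed $10^{-7\crn(K)}$ and $10^{-26\crn(K)}$ bounds; these are routine numerical comparisons.

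The main obstacle is not the counting — that is essentially mechanical once the construction is in hand — but verifying that the $K$-entanglement construction of~\cite{Mal18}, which was originally designed for a single auxiliary knot~$\gamma$ and used only for the universal estimate in Theorem~\ref{th:hyperbolic0}, genuinely goes through with~$K$ in place of~$\gamma$ while preserving (a) primeness of the output and (b) the promised linear-in-$\crn(K)$ crossing cost. Primeness is the delicate part: one must check that the only obstruction — a soluble crossing at the insertion site — is ruled out by Corollary~\ref{cor:wpPT} for \emph{every} minimal diagram of \emph{every} prime~$J$, and that the Unique Factorization Theorem~\cite{Sch49} then forces the complementary tangle to remain trivial so that connected-sum decompositions cannot appear. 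Since all of this is already assembled in~\cite{Mal18} and only needs Corollary~\ref{cor:wpPT} as the missing input (precisely as in the deduction of Theorem~\ref{th:hyperbolic0}), I would present the proof as: ``invoke Corollary~\ref{cor:wpPT}; apply the $K$-entanglement construction and primeness argument of~\cite[\S\S 7--10]{Mal18} verbatim with~$K$ as the pattern-defining knot; perform the crossing-count bookkeeping to obtain the exponents $6\crn(K)$ and $6(4\crn(K)+1)$; conclude with the numerical estimate on~$\lambda$.''
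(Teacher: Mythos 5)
Your overall route is the paper's own: the paper proves Theorem~\ref{th:satellite} by the single observation that, thanks to Corollary~\ref{cor:wpPT}, assertion~(iv) of Proposition~1 of~\cite{Mal19} holds for all prime knots, so that the proof of Theorem~1 of~\cite{Mal19} goes through verbatim with ``prime non-split links'' replaced by ``prime knots''. One attribution point: the $K$-dependent part you single out as the main obstacle (arbitrary companion $K$ in place of a fixed auxiliary knot~$\gamma$, the exponents $6\crn(K)$ and $6(4\crn(K)+1)$, and the numerical estimates on~$\lambda$) is already carried out in~\cite{Mal19}, not in~\cite{Mal18}; proposing to rederive it from~\cite{Mal18} is workable but duplicates what~\cite{Mal19} supplies, and the paper leans on the latter.

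The one step you work out yourself, the counting, is stated in a form that is neither justified nor consistent with your conclusion. You claim that $J\mapsto\widehat{J}$ is injective on \emph{all} prime knots, hence $S_n(K)\ge P_{n-c(K)}$; if that were available, the ratio argument would yield the stronger bound $\limsup_n S_n(K)/P_n\ge\lambda^{-c(K)}$, not $1/(1+\lambda^{c(K)})$, so your own derivation does not line up. More importantly, the justification offered (``one recovers $J$ by deleting the entanglement \dots the satellite structure is canonically locatable'') is precisely the delicate point: when $J$ is itself a satellite of~$K$, or its complement carries other essential tori, the inserted structure need not be recoverable unambiguously, and global injectivity is not what the machinery of~\cite{Mal18} and~\cite{Mal19} provides. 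The very shape $1/(1+\lambda^{c})$ of the bound reflects the weaker (and sufficient) counting inequality of the type $S_{n+c}(K)\ge P_n-S_n(K)$, i.e.\ one only needs to handle prime knots that are not already satellites of~$K$, combined with iterating the ratio $P_{n+c}/P_n$ against $\lambda=\limsup_{n\to\infty}\sqrt[n]{P_n}$. So either prove the injectivity statement with genuine care (more than is needed), or, as the paper does, quote the counting argument of~\cite{Mal19} as it stands, with Corollary~\ref{cor:wpPT} feeding assertion~(iv) of its Proposition~1.
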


In order to prove Theorem~\ref{th:satellite}, it is enough to replace ``prime non-split links'' with ``prime knots'' in the proof of Theorem~1 in~\cite{Mal19} and observe that, by Corollary~\ref{cor:wpPT}, assertion~(iv) of Proposition~1 in~\cite{Mal19} holds for all prime knots.

\begin{remark}
We conjecture that for any nontrivial knot~$K$ the proportion of satellites of~$K$ among all of the prime knots of $n$ or fewer crossings tends to $1$ as $n$ approaches infinity. Certain modifications of techniques developed in~\cite{Mal19} significantly strengthen the estimates of Theorems~\ref{th:hyperbolic0} and~\ref{th:satellite}, however, as far as we know, these techniques are not enough to prove this conjecture.
\end{remark}

\end{document}